\theoremstyle{plain}
\newtheorem{theorem}                 {Theorem}      [section]
\newtheorem{corollary}    [theorem]  {Corollary}
\newtheorem{lemma}        [theorem]  {Lemma}
\newtheorem{proposition}  [theorem]  {Proposition}
\theoremstyle{definition}
\newtheorem{definition}   [theorem]  {Definition}
\newtheorem{example}      [theorem]  {Example}
\newcommand{\eg}[1]{ {\textcolor{red}{#1}}}
\numberwithin{equation}{section}
\def \cn{{\mathbb C}}
\def \H{{\mathbb H}}
\def \rn{{\mathbb R}}
\def \H{\mathcal H}
\def \V{\mathcal V}
\def\nab#1#2{\hbox{$\nabla$\kern -.3em\lower 1.0 ex
		\hbox{$#1$}\kern -.1 em {$#2$}}}
\def \ip #1#2{\langle #1,#2 \rangle}
\def \lb#1#2{[#1,#2]}
\def \r{\mathfrak{r}}
\def\jid(#1#2#3){\left[\left[#1,#2\right],#3\right] + \left[\left[#3,#1\right],#2\right] + \left[\left[#2,#3\right],#1\right]}
\DeclareMathOperator{\Div}{div} 
\DeclareMathOperator{\trace}{trace}
\numberwithin{equation}{section}
\def\jid(#1#2#3){\left[\left[#1,#2\right],#3\right] + \left[\left[#3,#1\right],#2\right] + \left[\left[#2,#3\right],#1\right]} 
\def\BV(#1#2){B^{\V}(#1,#2)}
\def\hproj(#1#2){\frac{1}{2}\,\H\,(\nab(#1#2) + \nab(#2#1))}
\def\BH(#1#2){B^{\H}(#1,#2)}
\def\vproj(#1#2){\frac{1}{2}\,\V\,(\nab(#1#2) + \nab(#2#1))}
\def\lieb(#1#2){\left[#1,#2\right]}
\begin{document}

\title
[Proper Biharmonic Maps and $(2,1)$-Harmonic Morphisms]
{Proper Biharmonic Maps and $(2,1)$-Harmonic Morphisms from Some Wild Geometries}

\author{Elsa Ghandour}
\address{Mathematics, Faculty of Science\\
University of Lund\\
Box 118, Lund 221\\
Sweden}
\email{Elsa.Ghandour@math.lu.se}

\author{Sigmundur Gudmundsson}
\address{Mathematics, Faculty of Science\\
	University of Lund\\
	Box 118, Lund 221\\
	Sweden}
\email{Sigmundur.Gudmundsson@math.lu.se}

\begin{abstract}
In this work we construct a variety of new complex-valued proper biharmonic maps and $(2,1)$-harmonic morphisms on Riemannian manifolds with non-trivial geometry.  These are solutions to a non-linear system of partial differential equations depending on the geometric data of the manifolds involved.
\end{abstract}

\subjclass[2020]{53C30, 53C43, 58E20}

\keywords{Lie groups, conformal foliations, minimal foliations, harmonic morphisms}

\dedicatory{COI statement: All authors declare that they have no conflicts of interest}

\maketitle

\section{Introduction}

The concept of a {\it harmonic morphism} $\phi:(M,g)\to(N,h)$, between Riemannian manifolds, was introduced by Fuglede and Ishihara in the late 1970s independently, see \cite{Fug} and \cite{Ish}. These are maps pulling back local real-valued harmonic functions on $N$ to harmonic functions on $M$.  These objects have an interesting connection with the geometry of the manifolds involved and have lead to vibrant research activities, as can be traced in the excellent work \cite{Bai-Woo-book}, by Baird and Wood, and the regularly updated online bibliography \cite{Gud-bib}, maintained by the second author.

Recently, the notion was generalised to  $(p,q)$-{\it harmonic morphisms}, pulling back real-valued $q$-harmonic functions on $N$ to $p$-harmonic functions on $M$, see \cite{Gha-Gud-1}.  The case of $(2,1)$ had earlier been studied in \cite{Gha-Ou-1} under the name {\it generalised harmonic morphisms}.  In \cite{Gha-Gud-1}, the authors characterise complex-valued $(p,q)$-harmonic morphisms  $\phi:(M,g)\to\cn$ in terms of a heavily non-linear system of partial differential equations.  They also provide methods for producing explicit solutions in the case when the domain $(M,g)$ is the  $m$-dimensional Euclidean space.

The principal aim of this work is to extend the study to complex-valued $(2,1)$-harmonic morphisms from Riemannian manifolds $(M,g)$.  We model our manifolds $M$ as open subsets of $\rn^m$, equipped with a Riemannian metric $g$ of a particular form, see Section \ref{section-geometries}.  We then investigate when the natural projection $\Phi:(M,g)\to\cn$ onto the first two coordinates is {\it horizontally conformal}, {\it harmonic} and even {\it biharmonic}.  This leads to a {\it non-linear} system of partial differential equations involving the geometric data on $(M,g)$.  We then find several explicit solutions and thereby construct metrics $g$ turning the projection $\Phi$ into a proper biharmonic map and even a proper $(2,1)$-harmonic morphism.  By this we construct the first known complex-valued $(2,1)$-harmonic morphisms from Riemannian manifolds with non-trivial geometry.  Since the problem is invariant under conformal changes of the metric on $(N,h)$ this provides local solutions to any Riemann surface, see Proposition  \ref{proposition-holomorphic-composition}.

\section{Preliminaries}\label{section-preliminaries}

Let $(M,g)$ be an $m$-dimensional  Riemannian manifold and $T^{\cn}M$ be the complexification of the tangent bundle $TM$ of $M$. We extend the metric $g$ to a complex-bilinear form on $T^{\cn}M$.  Then the gradient $\nabla \phi$ of a complex-valued function $\phi:(M,g)\to\cn$ is a section of $T^{\cn}M$.  In this situation, the well-known complex linear {\it Laplace-Beltrami operator} (alt. {\it tension field}) $\tau$ on $(M,g)$ acts locally on $\phi$ as follows
$$
\tau(\phi)=\Div (\nabla \phi)=\sum_{i,j=1}^m\frac{1}{\sqrt{|g|}} \frac{\partial}{\partial x_j}
\left(g^{ij}\, \sqrt{|g|}\, \frac{\partial \phi}{\partial x_i}\right).
$$
For two complex-valued functions $\phi,\psi:(M,g)\to\cn$ we have the following well-known relation
\begin{equation}\label{equation-basic}
	\tau(\phi\cdot \psi)=\tau(\phi)\cdot \psi +2\cdot\kappa(\phi,\psi)+\phi\cdot\tau(\psi),
\end{equation}
where the complex bilinear {\it conformality operator} $\kappa$ is given by $\kappa(\phi,\psi)=g(\nabla \phi,\nabla \psi)$.  Locally this satisfies 
$$\kappa(\phi,\psi)=\sum_{i,j=1}^mg^{ij}\cdot\frac{\partial \phi}{\partial x_i}\frac{\partial \psi}{\partial x_j}.$$

We are now ready to define the complex-valued proper $p$-harmonic functions.

\begin{definition}\label{definition-proper-r-harmonic}
For a positive integer $p$, the iterated Laplace-Beltrami operator $\tau^p$ is given by
$$\tau^{0} (\phi)=\phi\ \ \text{and}\ \ \tau^p (\phi)=\tau(\tau^{(p-1)}(\phi)).$$	We say that a complex-valued function $\phi:(M,g)\to\cn$ is
\begin{enumerate}
\item[(a)] {\it $p$-harmonic} if $\tau^p (\phi)=0$, and
\item[(b)] {\it proper $p$-harmonic} if $\tau^p (\phi)=0$ and $\tau^{(p-1)} (\phi)$ does not vanish identically.
\end{enumerate}
\end{definition}

We now introduce the natural notion of a $(p,q)$-harmonic morphism.  For $(p,q)=(1,1)$ this is the classical case of harmonic morphisms introduced by Fuglede and Ishihara, in \cite{Fug} and \cite{Ish}, independently.

\begin{definition}\label{definition-pq}
	A map $\phi:(M,g)\to (N,h)$ between Riemannian manifolds is said to be a {\it $(p,q)$-harmonic morphism} if, for any {\it $q$-harmonic} function $f:U\subset N\to\rn$, defined on an open subset $U$ such that $\phi^{-1}(U)$ is not empty, the composition $f\circ\phi:\phi^{-1}(U)\subset M\to\rn$ is  {\it $p$-harmonic}.
\end{definition}

As an immediate consequence of Definition \ref{definition-pq} we have the following natural composition law.

\begin{lemma}\label{lemma-composition}
	Let $\phi:(M,g)\to (\bar N,\bar h)$ be a $(p,r)$-harmonic morphism between Riemannian manifolds.  If $\psi:(\bar N,\bar h)\to (N,h)$ is an $(r,q)$-harmonic morphism then  the composition $\psi\circ\phi:(M,g)\to (N,h)$ is a $(p,q)$-harmonic morphism.
\end{lemma}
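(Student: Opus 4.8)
The plan is to chase the definitions directly, applying the two hypotheses in succession. Fix an arbitrary $q$-harmonic function $f:U\subset N\to\rn$, defined on an open subset $U$ for which $(\psi\circ\phi)^{-1}(U)$ is non-empty; the goal is to show that $f\circ(\psi\circ\phi)$ is $p$-harmonic on $(\psi\circ\phi)^{-1}(U)$, since by Definition \ref{definition-pq} this is exactly what it means for $\psi\circ\phi$ to be a $(p,q)$-harmonic morphism.

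First I would record the two elementary set-theoretic identities on which everything rests, namely the associativity of composition $f\circ(\psi\circ\phi)=(f\circ\psi)\circ\phi$ and the preimage identity $(\psi\circ\phi)^{-1}(U)=\phi^{-1}(\psi^{-1}(U))$. In particular the non-emptiness of $(\psi\circ\phi)^{-1}(U)$ forces both $\psi^{-1}(U)\subset\bar N$ and $\phi^{-1}(\psi^{-1}(U))\subset M$ to be non-empty, so that each of the two applications of Definition \ref{definition-pq} below is legitimate.

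Next I would invoke the hypothesis that $\psi$ is an $(r,q)$-harmonic morphism: since $f$ is $q$-harmonic on $U$, Definition \ref{definition-pq} yields that $f\circ\psi:\psi^{-1}(U)\subset\bar N\to\rn$ is $r$-harmonic. I would then feed this intermediate $r$-harmonic function into the hypothesis that $\phi$ is a $(p,r)$-harmonic morphism; applying Definition \ref{definition-pq} a second time shows that $(f\circ\psi)\circ\phi$ is $p$-harmonic on $\phi^{-1}(\psi^{-1}(U))$. By the two identities recorded above this is precisely the statement that $f\circ(\psi\circ\phi)$ is $p$-harmonic on $(\psi\circ\phi)^{-1}(U)$, and as $f$ was an arbitrary $q$-harmonic function the verification is complete.

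There is no genuine analytic obstacle here; the content is entirely formal, a transitivity property built into Definition \ref{definition-pq}. The only point requiring care is the bookkeeping of domains: one must confirm that the intermediate function $f\circ\psi$ is indeed defined on an open set whose $\phi$-preimage is non-empty before invoking the second hypothesis, and this is exactly what the preimage identity guarantees.
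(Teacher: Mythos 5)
Your argument is correct and is precisely the definition-chasing the paper has in mind: the authors give no written proof, stating the lemma as "an immediate consequence of Definition \ref{definition-pq}," and your two successive applications of that definition (together with the preimage and associativity bookkeeping) supply exactly the omitted details.
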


\section{Some Rather Wild Geometries}
\label{section-geometries}

The aim of this section is to describe a particular collection of Riemannian manifolds $(M,g)$ investigated in this work. As far as we know they have not been studied before in the geometric literature.  We present formulae for their sectional curvatures to show that the geometry is here far from being trivial.  We then give two concrete examples that turn out to be useful later on.
\smallskip

For an open subset $M$ of $\rn^m$, let $\lambda,\lambda_1,\dots,\lambda_m:M\to\rn$ be $C^3$-functions such that $\lambda=\lambda_1=\lambda_2$ and equip the manifold $M$ with the Riemannian metric $g$ of the special form
$$g=e^{-2\lambda}(dx^2+dy^2)+e^{-2\lambda_3}dx_3^2+\cdots +e^{-2\lambda_m}dx_m^2.$$ 
For our purposes it is practical to introduce the function $f:M\to\rn$ with $$f({\bf x})=\sum_{k=3}^m\lambda_k({\bf x}).$$
For the tangent bundle $TM$ of $(M,g)$ we have the following global orthonormal frame 
$$
\left\{
X_1=e^{\lambda_1}\cdot\frac{\partial}{\partial x_1},\,
\dots\, ,
X_m=e^{\lambda_m}\cdot\frac{\partial}{\partial x_m}
\right\}$$
When appropriate, we shall by ${\bf x}=(x,y,x_3,\dots,x_m)$ denote the canonical coordinates $(x_1,\dots,x_m)$ on $\rn^m$  and set $X=X_1$, $Y=X_2$.
The Lie brackets for $TM$ satisfy
$$\lb {X_j}{X_k}=
e^{\lambda_j}\,(\lambda_k)_{x_j}X_k
-e^{\lambda_k}\,(\lambda_j)_{x_k}X_j, $$
where the subscript $x_j$ means the partial derivative with respect to the j-th coordinate function.  A standard computation shows that for  the sectional curvature $K(X_j\wedge X_k)$ of the $2$-plane $X_j\wedge X_k$ we have 
\begin{eqnarray*}
K(X_j\wedge X_k)
&=&
e^{2\lambda_j}[(\lambda_j)_{x_j}(\lambda_k)_{x_j}
+(\lambda_k)_{x_jx_j}-(\lambda_k)_{x_j}^2]\\
& &
+\,e^{2\lambda_k}[(\lambda_k)_{x_k}(\lambda_j)_{x_k}
+(\lambda_j)_{x_kx_k}-(\lambda_j)_{x_k}^2]\\
& &
-\sum_{r\notin\{j,k\}}
e^{2\lambda_r}(\lambda_k)_{x_r}(\lambda_j)_{x_r}.
\end{eqnarray*}
In particular, for the horizonal section $X\wedge Y$ we have 
\begin{eqnarray*}
K(X\wedge Y)
&=&
e^{2\lambda}(\lambda_{xx}+\lambda_{yy})
-\sum_{k=3}^me^{2\lambda_k}\lambda_{x_k}^2.
\end{eqnarray*}

Let $\Phi:(\rn^m,g)\to\cn$ be the horizontally conformal submersion $$\Phi:{\bf x}\mapsto (x+iy)\cong (x\cdot\text{\bf e}_1+y\cdot\text{\bf e}_2)$$ 
with dilation $e^\lambda:\rn^m\to\rn^+$.
For the tangent bundle $TM$ we have the following  orthogonal decomposition $TM=\H\oplus\V$ into its horizontal and vertical subbundles $\H$ and $\V$, respectively, where 
$$\H=\text{span}\{X,Y\}\ \ \text{and}\ \ \V=\text{span}\{X_3,\dots ,X_m\}.$$

\begin{definition}
For an open subset $M$ of $\rn^m$ we denote by $\Omega(M)$ the set of $C^3$-functions $\omega:M\to\rn$ which are independent of the first coordinates $(x,y)$ of ${\bf x}=(x,y,x_3,\dots,x_m)$ i.e. 
$$\Omega(M)=\{\omega\in C^3(M,\rn)|\ \omega_x=\omega_y=0\}.$$
\end{definition}

We now present two Riemannian manifolds $(M,g)$ with non-trivial geometry.  Later in this work, we then show that the complex-valued function $\Phi:(M,g)\to\cn$ is proper biharmonic in these and other similar cases.

\begin{example}
For an open subset $M$ of $\rn^3$, constants  $A,B,\theta\in\rn$ and $\alpha\in\Omega(M)$ let the functions $\lambda,f:M\to\rn$ be defined by 
$$
\lambda (x,y,z)=\alpha(z),\ \ 
f(x,y,z)=\log(1 + \tan(\Theta)^2),
$$
where 
$$\Theta(x,y)=A\cdot (\cos\theta\cdot x+\sin\theta\cdot y)+B.$$
Then equip $M$ with the Riemannian metric $g$ given by 
$$g=e^{-2\lambda}(dx^2+dy^2)+e^{-2f}dz^2.$$
Then the sectional curvature function $K$ of the manifold $(M^3,g)$ satisfies
$$
K(X\wedge Y)=
-\frac
{\lambda_z^2}
{\cos^4(\Theta)},
$$
$$
K(X\wedge Z)=
\frac
{\lambda_{zz}-\lambda_z^2+2A^2e^{2\lambda}\cos^2(\theta)\cdot (2\cos^4(\Theta)-\cos^2(\Theta))}
{\cos^4(\Theta)},
$$
$$
K(Y\wedge Z)=
\frac
{\lambda_{zz}-\lambda_z^2+2A^2e^{2\lambda}\sin^2(\theta)\cdot (2\cos^4(\Theta)-\cos^2(\Theta))}
{\cos^4(\Theta)}.
$$

If we assume that $A,B,\theta\in\Omega(M)$, rather than  $A,B,\theta\in\rn$, then the geometry of $(M,g)$ runs rather wild.  The formulae for the sectional curvature $K$ become far too extensive to be included in this work. For explicit proper biharmonic maps in that general case, see Example \ref{example-mother-1}.
\end{example}

\begin{example}
For an open subset $M$ of $\rn^4$, constants $A,B,\Psi\in\rn$ and $\alpha\in\Omega(M)$, let the functions $\lambda,f:M\to\rn$ be defined by 
$$
\lambda ({\bf x})=\alpha(z,w),$$
$$f({\bf x})=\lambda_3({\bf x})+\lambda_4({\bf x})= -2\log(A\cdot(\cos(t)\cdot x +\sin(t)\cdot y)+ B),$$
where
$$\lambda_3({\bf x})=-\log(A\cdot(\cos(t)\cdot x +\sin(t)\cdot y)+ B)+\Psi,$$
$$\lambda_4({\bf x})=-\log(A\cdot(\cos(t)\cdot x +\sin(t)\cdot y)+ B)-\Psi.$$ 
Then equip $M$ with the Riemannian metric $g$ satifying 
$$g=e^{-2\lambda}(dx^2+dy^2)+e^{-2\lambda_3}dz^2+e^{-2\lambda_4}dw^2.$$
Then a standard computation shows that the sectional curvatures of $(M,g)$ fulfill 
$$K(X\wedge Y)=\frac
{e^{2\Psi}\cdot \lambda^2_{z}+e^{-2\Psi}\cdot\lambda^2_{w}}
{(A\cdot (\cos(t)\cdot x +\sin(t)\cdot y)+B)^2},
$$
$$
K(X\wedge Z)=K(Y\wedge Z)=\frac
{e^{2\Psi}\cdot (\lambda_{zz}-\lambda_z^2)}{(A\cdot (\cos(t)\cdot x +\sin(t)\cdot y)+B)^2},$$
$$
K(X\wedge W)=K(Y\wedge W)=\frac
{e^{-2\Psi}\cdot (\lambda_{ww}-\lambda_w^2)}{(A\cdot (\cos(t)\cdot x +\sin(t)\cdot y)+B)^2},
$$
$$
K(Z\wedge W)= -\frac{e^{2\lambda}\cdot A^2}{(A\cdot (\cos(t)\cdot x +\sin(t)\cdot y) + B)^2}.
$$
\smallskip

\noindent	
If we assume that $A,B,\Psi\in\Omega(M)$, rather than  $A,B,\Psi\in\rn$, then the formulae for the sectional curvature $K$ become very complicated, including partial derivatives of these functions. For explicit proper $(2,1)$-harmonic morphisms in that general case, see Example \ref{example-(2,1)-1}

\end{example}

\section{The tension Fields $\tau(\Phi)$ and $\tau^2(\Phi)$}
\label{section-tension-fields}

Our first principal aim is to construct Riemannian manifolds $(M,g)$, of the form introduced in Section \ref{section-geometries}, such that the  horizontally conformal submersion $\Phi:(M,g)\to\cn$ with   $$\Phi:{\bf x}\mapsto (x+iy)\cong (x\cdot\text{\bf e}_1+y\cdot\text{\bf e}_2)$$ 
is a proper biharmonic map.  For this purpose we now want to determine the tension field $\tau(\Phi)$ and the bitension field $\tau^2(\Phi)$ of $\Phi$, respectively.

\begin{lemma}\label{lemma-sum-DXX}
Let $(M^m,g)$ be a Riemannian manifold, as defined above, with the orthonormal basis $\{X_1,\dots ,X_m\}$ for the tangent bundle $TM$.  Then its Levi-Civita connection satisfies
$$
\sum_{k=1}^m\nabla_{X_k}{X_k}
=\sum_{j=1}^m\sum_{k\neq j}^me^{\lambda_j}\,(\lambda_k)_{x_j}X_j.
$$
\end{lemma}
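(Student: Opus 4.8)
The plan is to compute the Levi-Civita connection $\nabla_{X_k} X_k$ for each $k$ and then sum. The natural tool is the Koszul formula, but since the paper has already recorded the Lie brackets
$$
\lb{X_j}{X_k}=e^{\lambda_j}\,(\lambda_k)_{x_j}X_k-e^{\lambda_k}\,(\lambda_j)_{x_k}X_j,
$$
for an orthonormal frame it is cleaner to use the identity
$$
2\,g(\nabla_{X_k}X_k,X_j)=2\,g(\lb{X_j}{X_k},X_k)=-2\,g(\lb{X_k}{X_j},X_k),
$$
which holds because the metric coefficients in this frame are constant (the frame is orthonormal), so the first three Koszul terms $X_k g(X_k,X_j)$, $X_k g(X_j,X_k)$, $-X_j g(X_k,X_k)$ all vanish. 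First I would fix $k$ and expand $g(\nabla_{X_k}X_k,X_j)$ for $j\neq k$ via this bracket-only form of Koszul; the diagonal term $g(\nabla_{X_k}X_k,X_k)=\tfrac12 X_k g(X_k,X_k)=0$ drops out automatically.

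Next I would read off the $X_j$-component from the bracket formula. Reading $\lb{X_k}{X_j}=e^{\lambda_k}(\lambda_j)_{x_k}X_j-e^{\lambda_j}(\lambda_k)_{x_j}X_k$, the coefficient of $X_k$ is $-e^{\lambda_j}(\lambda_k)_{x_j}$, so $g(\lb{X_k}{X_j},X_k)=-e^{\lambda_j}(\lambda_k)_{x_j}$, giving
$$
g(\nabla_{X_k}X_k,X_j)=e^{\lambda_j}\,(\lambda_k)_{x_j}\quad\text{for }j\neq k.
$$
Since $\{X_1,\dots,X_m\}$ is orthonormal, this means
$$
\nabla_{X_k}X_k=\sum_{j\neq k}e^{\lambda_j}\,(\lambda_k)_{x_j}\,X_j.
$$
Summing over $k$ and relabelling the double sum then yields exactly
$$
\sum_{k=1}^m\nabla_{X_k}X_k=\sum_{j=1}^m\sum_{k\neq j}^m e^{\lambda_j}\,(\lambda_k)_{x_j}\,X_j,
$$
which is the claimed formula.

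The only genuine subtlety, and where I would be most careful, is verifying that the non-bracket Koszul terms really vanish — that is, confirming that $g(X_i,X_j)=\delta_{ij}$ is constant in this frame so that all directional-derivative-of-metric terms are zero. This is immediate from the construction of $\{X_1,\dots,X_m\}$ as a global orthonormal frame, but it is the hypothesis that makes the bracket-only computation legitimate. After that, the derivation is purely bookkeeping: expanding each bracket, extracting the correct component, and managing the index relabelling in the final double sum. I expect no analytic obstacle, only the need to keep the roles of $j$ and $k$ straight so that the factor $e^{\lambda_j}(\lambda_k)_{x_j}$ (rather than $e^{\lambda_k}(\lambda_j)_{x_k}$) appears in each summand.
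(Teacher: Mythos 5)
Your proposal is correct and follows essentially the same route as the paper: both reduce $g(\nabla_{X_k}X_k,X_j)$ to $g(\lb{X_j}{X_k},X_k)$ via the Koszul formula in the orthonormal frame (where all metric-derivative terms vanish), substitute the recorded Lie bracket formula, and collect the double sum. The only cosmetic difference is that you compute each $\nabla_{X_k}X_k$ separately and then sum and relabel, whereas the paper manipulates the full double sum at once.
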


\begin{proof}
The statement follows from the following computation
\begin{eqnarray*}
\sum_{k=1}^m\nabla_{X_k}{X_k}
&=&
\sum_{j,k=1}^mg(\nabla_{X_k}{X_k},X_j)\,X_j\\
&=&
\sum_{j,k=1}^mg(\lb{X_j}{X_k},X_k)\,X_j\\
&=&
\sum_{j,k=1}^mg( 
e^{\lambda_j}\,(\lambda_k)_{x_j}X_k
-e^{\lambda_k}\,(\lambda_j)_{x_k}X_j,X_k)\,X_j\\
&=&
 \sum_{j,k=1}^me^{\lambda_j}\,(\lambda_k)_{x_j}\,X_j
-  \sum_{j=1}^me^{\lambda_j}\,(\lambda_j)_{x_j}\,X_j\\
&=&
\sum_{j=1}^m\sum_{k\neq j}^m
e^{\lambda_j}\,(\lambda_k)_{x_j}X_j.
\end{eqnarray*}
\end{proof}

\begin{lemma}\label{lemma-sum-dphi(DXX)}
Let $\Phi:(M^m,g)\to\cn$ be the horizontally conformal submersion $$\Phi:{\bf x}\mapsto (x+iy)\cong (x\cdot\text{\bf e}_1+y\cdot\text{\bf e}_2)$$ 
with dilation $e^\lambda:M\to\rn^+$. Then we have the following relation
$$
\sum_{k=1}^md\Phi(\nabla_{X_k}{X_k})
=e^{2\lambda}\,(\lambda_{x}+f_{x})\cdot\text{\bf e}_1
+e^{2\lambda}\,(\lambda_{y}+f_{y})\cdot\text{\bf e}_2.
$$
\end{lemma}

\begin{proof}
It follows from Lemma \ref{lemma-sum-DXX} and the fact that the differential $d\Phi$ satisfies $d\Phi(X_3)=\cdots =d\Phi(X_m)=0$ that  
\begin{eqnarray*}
& &\sum_{k=1}^md\Phi(\nabla_{X_k}{X_k})\\
&=&\sum_{j=1}^m\sum_{k\neq j}^me^{\lambda_j}\,
(\lambda_k)_{x_j}d\Phi(X_j)\\
&=&
 e^{\lambda_1}\,(\lambda_2)_{x_1}d\Phi(X_1)
+e^{\lambda_2}\,(\lambda_1)_{x_2}d\Phi(X_2)
+\sum_{j=1}^2\sum_{k=3}^me^{\lambda_j}\,
(\lambda_k)_{x_j}d\Phi(X_j)\\
&=&
 e^{2\lambda}\,\lambda_{x_1}\text{\bf e}_1
+e^{2\lambda}\,\lambda_{x_2}\text{\bf e}_2
+\sum_{j=1}^2e^{\lambda_j}\,
f_{x_j}d\Phi(X_j)\\
&=&
 e^{2\lambda}\,(\lambda_{x}+f_{x})\,\text{\bf e}_1
+e^{2\lambda}\,(\lambda_{y}+f_{y})\,\text{\bf e}_2.
\end{eqnarray*}
\end{proof}

With the next result we provide a formula for the tension field $\tau(\Phi)$ of the horizontally conformal submersion $\Phi$.

\begin{proposition}\label{proposition-tension-field}
Let $\Phi:(M,g)\to\cn$ be the horizontally conformal submersion $$\Phi:{\bf x}\mapsto (x+iy)\cong (x\cdot\text{\bf e}_1+y\cdot\text{\bf e}_2)$$ 
with dilation $e^\lambda:M\to\rn^+$.  Then the tension field $\tau(\Phi)$ of $\Phi$ satisfies
\begin{eqnarray*}
\tau(\Phi)&=&
-\,e^{2\lambda}\,(
 f_{x}\cdot\text{\bf e}_1
+f_{y}\cdot\text{\bf e}_2).
\end{eqnarray*}
\end{proposition}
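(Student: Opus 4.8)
The plan is to compute $\tau(\Phi)$ directly from its definition as the trace of the second fundamental form, $\tau(\Phi)=\trace_g\nabla d\Phi$, evaluated in the global orthonormal frame $\{X_1,\dots,X_m\}$. Since the target $\cn\cong\rn^2$ is flat, the pull-back connection on $\Phi^{-1}T\cn$ is the trivial one, so for each $k$ the Hessian term reads $\nabla d\Phi(X_k,X_k)=X_k\bigl(d\Phi(X_k)\bigr)-d\Phi(\nabla_{X_k}X_k)$, where the first summand is merely the directional derivative of the $\cn$-valued function $d\Phi(X_k)$. Summing over $k$ therefore splits the tension field into two pieces,
$$
\tau(\Phi)=\sum_{k=1}^mX_k\bigl(d\Phi(X_k)\bigr)-\sum_{k=1}^md\Phi(\nabla_{X_k}X_k),
$$
and the second sum is exactly the quantity already evaluated in Lemma \ref{lemma-sum-dphi(DXX)}.

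The first step I would carry out is to record the values $d\Phi(X_k)$. Since $\Phi$ depends only on the first two coordinates and $X_k=e^{\lambda_k}\partial/\partial x_k$, we have $d\Phi(X_1)=e^{\lambda}\,\text{\bf e}_1$, $d\Phi(X_2)=e^{\lambda}\,\text{\bf e}_2$ and $d\Phi(X_k)=0$ for $k\geq3$; hence only the terms $k=1,2$ survive in the first sum. Differentiating along the frame, using $X_1=e^{\lambda}\partial_x$ and $X_2=e^{\lambda}\partial_y$ together with the fact that $\text{\bf e}_1,\text{\bf e}_2$ are constant, gives $X_1(e^{\lambda})=e^{2\lambda}\lambda_x$ and $X_2(e^{\lambda})=e^{2\lambda}\lambda_y$, so that
$$
\sum_{k=1}^mX_k\bigl(d\Phi(X_k)\bigr)=e^{2\lambda}\lambda_x\,\text{\bf e}_1+e^{2\lambda}\lambda_y\,\text{\bf e}_2.
$$

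Finally, I would subtract the expression from Lemma \ref{lemma-sum-dphi(DXX)}, namely $e^{2\lambda}(\lambda_x+f_x)\,\text{\bf e}_1+e^{2\lambda}(\lambda_y+f_y)\,\text{\bf e}_2$. The $\lambda_x$ and $\lambda_y$ contributions then cancel exactly, leaving $\tau(\Phi)=-\,e^{2\lambda}\bigl(f_x\,\text{\bf e}_1+f_y\,\text{\bf e}_2\bigr)$, as claimed. There is no genuine obstacle in this argument; the only point deserving care is precisely this cancellation of the $\lambda$-derivatives between the two sums, which is the whole content of the computation and accounts for the striking fact that the tension field is governed solely by the vertical data $f=\sum_{k=3}^m\lambda_k$ and not by the conformal factor $\lambda$ of the horizontal part.
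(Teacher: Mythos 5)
Your proposal is correct and follows essentially the same route as the paper: both compute $\tau(\Phi)=\sum_k\{\nabla^\Phi_{X_k}d\Phi(X_k)-d\Phi(\nabla_{X_k}X_k)\}$ in the frame $\{X_1,\dots,X_m\}$, evaluate the first sum as $e^{2\lambda}(\lambda_x\,\text{\bf e}_1+\lambda_y\,\text{\bf e}_2)$ using $d\Phi(X_k)=0$ for $k\geq 3$, and subtract the expression from Lemma \ref{lemma-sum-dphi(DXX)} so that the $\lambda$-terms cancel. No gaps.
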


\begin{proof}
The two vector fields $X_1$ and $X_2$ generate the horizontal distribution $\H$ so the horizontal conformality of $\Phi$ is a direct consequence of the fact that 
$$
d\Phi(X_1)=e^{\lambda}\cdot\text{\bf e}_1,\  
d\Phi(X_2)=e^{\lambda}\cdot\text{\bf e}_2, \ 
d\Phi(X_3)=0,\ \dots\ ,d\Phi(X_m)=0.
$$
The tension field $\tau(\Phi)$ of $\Phi$ is defined by the well-known formula 
\begin{eqnarray*}
\tau(\Phi)&=& \sum_{k=1}^m\left\{\nabla_{X_k}^\Phi d\Phi(X_k)-d\Phi(\nabla_{X_k}X_k)\right\}.
\end{eqnarray*}
For the first part, we have 
\begin{eqnarray*}
\sum_{k=1}^m\nabla_{X_k}^\Phi d\Phi(X_k)
&=&
\nabla^\Phi_{X_1}d\Phi(X_1)+\nabla^\Phi_{X_2}d\Phi(X_2)\\
&=&
e^{\lambda}\cdot\frac{\partial}{\partial_{x_1}}(e^{\lambda}\,\text{\bf e}_1)+
e^{\lambda}\cdot\frac{\partial}{\partial_{x_2}}(e^{\lambda}\,\text{\bf e}_2)\\
&=&e^{2\lambda}\,(\lambda_{x}\cdot\text{\bf e}_1
 +\lambda_{y}\cdot\text{\bf e}_2).
\end{eqnarray*}
For the second part, we now employ Lemma \ref{lemma-sum-DXX} and yield 
$$
\sum_{k=1}^md\Phi(\nabla_{X_k}{X_k})
=e^{2\lambda}\,(\lambda_{x}+f_{x})\cdot\text{\bf e}_1
+e^{2\lambda}\,(\lambda_{y}+f_{y})\cdot\text{\bf e}_2.
$$
The statement is now an immediate consequence of the above calculations.
\end{proof}

\begin{corollary}
The horizontally conformal submersion $\Phi:(M,g)\to\cn$, with 
$$\Phi:{\bf x}\mapsto (x+iy)\cong (x\cdot\text{\bf e}_1+y\cdot\text{\bf e}_2),$$ is harmonic and hence a harmonic morphism if and only if $(f_{x},f_{y})=0$.
\end{corollary}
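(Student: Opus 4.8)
The plan is to read off the harmonicity condition directly from the formula for the tension field already established in Proposition \ref{proposition-tension-field}, and then to invoke the classical characterisation of harmonic morphisms for the remaining implication.

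First, recall that a smooth map between Riemannian manifolds is harmonic precisely when its tension field vanishes identically. By Proposition \ref{proposition-tension-field}, the tension field of $\Phi$ is
$$
\tau(\Phi) = -\,e^{2\lambda}\,(f_{x}\cdot\text{\bf e}_1 + f_{y}\cdot\text{\bf e}_2).
$$
Since the factor $e^{2\lambda}$ is strictly positive everywhere on $M$ and $\text{\bf e}_1,\text{\bf e}_2$ are linearly independent, the vanishing of $\tau(\Phi)$ is equivalent to the simultaneous vanishing of the two coefficients, i.e. to $f_{x}=f_{y}=0$. This yields at once the stated equivalence between harmonicity of $\Phi$ and the condition $(f_{x},f_{y})=0$.

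For the second assertion, I would appeal to the characterisation of Fuglede and Ishihara \cite{Fug, Ish}: a map between Riemannian manifolds is a harmonic morphism if and only if it is both harmonic and horizontally (weakly) conformal. The map $\Phi$ is horizontally conformal by construction, with dilation $e^{\lambda}$, as already verified at the outset of the proof of Proposition \ref{proposition-tension-field} via the identities $d\Phi(X_1)=e^{\lambda}\text{\bf e}_1$, $d\Phi(X_2)=e^{\lambda}\text{\bf e}_2$ and $d\Phi(X_3)=\cdots=d\Phi(X_m)=0$. Hence, once $\Phi$ is harmonic it is automatically a harmonic morphism, which accounts for the word ``hence'' in the statement.

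There is no genuine obstacle in this argument, as the result is an immediate corollary of Proposition \ref{proposition-tension-field}; the only non-elementary ingredient is the Fuglede--Ishihara theorem, which supplies the bridge from the harmonicity of the horizontally conformal submersion $\Phi$ to the harmonic morphism property.
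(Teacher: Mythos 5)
Your argument is correct and follows exactly the route of the paper: the equivalence is read off from the tension field formula of Proposition \ref{proposition-tension-field} (using that $e^{2\lambda}>0$), and the passage from harmonicity to the harmonic morphism property is supplied by the Fuglede--Ishihara characterisation together with the horizontal conformality of $\Phi$. You have merely written out in full the details that the paper compresses into one sentence.
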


\begin{proof}
This is an immediate consequence of Proposition \ref{proposition-tension-field} and the characterisation of harmonic morphisms, proven by Fuglede and Ishihara in \cite{Fug} and \cite{Ish}, respectively.
\end{proof}
 
After determining the tension field $\tau(\Phi)$, we now turn our attention to the bitension field $\tau^2(\Phi)$.

\begin{definition}\label{definition-diff-operators}
For an open subset $M$ of $\rn^m$, let $\lambda,f:M\to\rn$ be differentiable functions on $M$ with coordinates ${\bf x}=(x,y,x_3,\dots,x_m)$. Then we define the {\it non-linear} partial differential operators $D_1,D_2$ by 
\begin{eqnarray*}
	D_1(\lambda,f)
	&=&
	\{\,(\lambda_{x}+f_{x})\cdot
	(2\lambda_{x}f_{x}+f_{xx})+\,(\lambda_{y}+f_{y})\cdot
	(2\lambda_{y}f_{x}+f_{xy})\\
	& &
	\qquad\qquad -(6\,\lambda_{x}^2\,f_{x}+2\,\lambda_{xx}\,f_{x}
	+5\,\lambda_{x}\,f_{xx}+f_{xxx})\\
	& &
	\qquad\qquad -(6\,\lambda_{y}^2\,f_{x}+2\,\lambda_{yy}\,f_{x}
	+5\,\lambda_{y}\,f_{xy}+f_{xyy})\,\},
\end{eqnarray*}
\begin{eqnarray*}
	D_2(\lambda,f)
	&=&
	\{\,(\lambda_{x}+f_{x})\cdot 
	(2\lambda_{x}f_{y}+f_{yx})
	+\,(\lambda_{y}+f_{y})\cdot (2\lambda_{y}f_{y}+f_{yy})\\
	& &
	\qquad\qquad -(6\,\lambda_{x}^2f_{y}+2\,\lambda_{xx}f_{y}
	+5\,\lambda_{x}\,f_{yx}+f_{yxx})\\
	& &
	\qquad\qquad -(6\,\lambda_{y}^2f_{y}+2\,\lambda_{yy}f_{y}
	+5\,\lambda_{y}\,f_{yy}+f_{yyy})\,\}.
\end{eqnarray*}
\end{definition}

With the next result we present a formula for the bitension field $\tau^2(\Phi)$ of the horizontally conformal submersion $\Phi$.
 
\begin{theorem}
Let $\Phi:(M,g)\to\cn$ be the horizontally conformal submersion $$\Phi:{\bf x}\mapsto (x+iy)\cong (x\cdot\text{\bf e}_1+y\cdot\text{\bf e}_2)$$ 
with dilation $e^\lambda:M\to\rn^+$. Then the bitension field $\tau^2(\Phi)$ of $\Phi$ satisfies 
\begin{eqnarray*}
	\tau^2(\Phi)&=&
	e^{4\lambda}\cdot D_1(\lambda,f)\cdot\text{\bf e}_1
	+e^{4\lambda}\cdot D_2(\lambda,f)\cdot\text{\bf e}_2.
\end{eqnarray*}
\end{theorem}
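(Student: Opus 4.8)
The plan is to reduce everything to the iterated scalar Laplace–Beltrami operator. Since $\Phi$ is $\cn$-valued and $\cn$ is flat, Definition \ref{definition-proper-r-harmonic} gives $\tau^2(\Phi)=\tau(\tau(\Phi))$, where $\tau$ acts complex-linearly on each component. By Proposition \ref{proposition-tension-field} the tension field is already in hand, $\tau(\Phi)=-e^{2\lambda}(f_x\cdot\text{\bf e}_1+f_y\cdot\text{\bf e}_2)$, so the whole statement becomes the computation of the two real quantities $\tau(-e^{2\lambda}f_x)$ and $\tau(-e^{2\lambda}f_y)$. Because the second expression arises from the first by interchanging the roles of $x$ and $y$, it will suffice to treat the $\text{\bf e}_1$-component and read off $D_2(\lambda,f)$ from $D_1(\lambda,f)$ at the very end.

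First I would write $\tau$ in the orthonormal frame as $\tau(\psi)=\sum_{k=1}^m X_k(X_k(\psi))-(\sum_{k=1}^m\nabla_{X_k}X_k)(\psi)$, with $X_k=e^{\lambda_k}\,\partial/\partial x_k$ and the second term supplied verbatim by Lemma \ref{lemma-sum-DXX}. Equivalently, using that $g$ is diagonal with $\sqrt{|g|}=e^{-(2\lambda+f)}$, one obtains the compact divergence form $\tau(\psi)=e^{2\lambda+f}\sum_{k=1}^m\partial_{x_k}(e^{2\lambda_k-2\lambda-f}\,\psi_{x_k})$, which is the form I would actually differentiate, since it isolates the metric weight in a single exponent.

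Applying this to $\psi=-e^{2\lambda}f_x$, the two horizontal indices $k=1,2$ have $\lambda_k=\lambda$, so the weight collapses to $e^{-f}$ and these terms contribute $e^{2\lambda+f}\,(\partial_x(e^{-f}\psi_x)+\partial_y(e^{-f}\psi_y))$. Expanding the derivatives of $\psi=-e^{2\lambda}f_x$ and factoring out $e^{4\lambda}$ produces precisely $D_1(\lambda,f)$. This bookkeeping is the heart of the argument: each differentiation generates products of first derivatives of $\lambda$ against first and second derivatives of $f$, together with the pure third derivatives $f_{xxx}$ and $f_{xyy}$, and one must verify that the numerical coefficients assemble into the $6,2,5,1$ pattern recorded in Definition \ref{definition-diff-operators}.

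The delicate point, and the step I expect to be the main obstacle, is the contribution of the fiber indices $k=3,\dots,m$. For these $\lambda_k\neq\lambda$ in general, so the summands $\partial_{x_k}(e^{2\lambda_k-2\lambda-f}\psi_{x_k})$ do not vanish term by term, and it is exactly here that the structural hypotheses $\lambda=\lambda_1=\lambda_2$ and $f=\sum_{k=3}^m\lambda_k$ must be brought to bear to show that these vertical contributions drop out, leaving only the horizontal part $e^{4\lambda}D_1(\lambda,f)$; this is the place where the special geometry of $(M,g)$ genuinely enters. Once the vertical terms are disposed of, the identical computation with $x$ and $y$ exchanged yields the $\text{\bf e}_2$-component $e^{4\lambda}D_2(\lambda,f)$, which completes the proof.
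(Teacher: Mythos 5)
Your reduction of $\tau^2(\Phi)$ to the componentwise iterated Laplacian $\tau(\tau(\Phi))$ is correct for the flat target $\cn$, and your divergence form $\tau(\psi)=e^{2\lambda+f}\sum_{k=1}^{m}\partial_{x_k}\bigl(e^{2\lambda_k-2\lambda-f}\,\psi_{x_k}\bigr)$ is a legitimate alternative to the paper's frame computation via $\sum_{k}\{\nabla^{\Phi}_{X_k}\nabla^{\Phi}_{X_k}\tau(\Phi)-\nabla^{\Phi}_{\nabla_{X_k}X_k}\tau(\Phi)\}$: expanding the two horizontal summands $k=1,2$ for $\psi=-e^{2\lambda}f_x$ does reproduce exactly the coefficient pattern of $D_1(\lambda,f)$ in Definition \ref{definition-diff-operators}, and likewise for $D_2$.

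The gap is precisely the step you defer. You assert that the fibre summands $k=3,\dots,m$ drop out because $\lambda_1=\lambda_2=\lambda$ and $f=\sum_{k\ge 3}\lambda_k$, but you give no argument, and no such cancellation occurs. For $\psi=-e^{2\lambda}f_x$ the vertical contribution is
$$e^{2\lambda+f}\sum_{k=3}^{m}\partial_{x_k}\bigl(e^{2\lambda_k-2\lambda-f}\,\psi_{x_k}\bigr),$$
which is nonzero whenever $e^{2\lambda}f_x$ depends on a fibre coordinate; for instance $m=3$, $\lambda=0$, $f=\lambda_3=xz$ gives $-x\,e^{2xz}\neq 0$ while $e^{4\lambda}D_1(\lambda,f)=0$, so the asserted identity cannot be derived without further assumptions. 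This is not a defect you introduced: the paper's own proof silently replaces $\sum_{k=1}^{m}$ by $\sum_{k=1}^{2}$ after its first display and likewise keeps only the horizontal components of $\sum_k\nabla_{X_k}X_k$, i.e.\ it only ever computes the horizontal part of the trace; your more careful attempt to account for the vertical directions exposes that the stated formula requires $\tau(\Phi)$ (equivalently $e^{2\lambda}f_x$ and $e^{2\lambda}f_y$) to be independent of $x_3,\dots,x_m$. To complete your proof you must either add that hypothesis, after which your horizontal computation already finishes the argument, or restrict the trace to the horizontal distribution as the paper implicitly does; the vanishing you hoped to prove in general is false.
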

\smallskip
 
 \begin{proof}
The bitension field $\tau^2(\Phi)$ of the $C^4$-map $\Phi$ is given by 
\begin{eqnarray*}
\tau^2(\Phi)
&=&
\sum_{k=1}^m\{
\nabla_{X_k}^\Phi\nabla_{X_k}^\Phi\tau(\Phi)
-\nabla^\Phi_{\nabla_{X_k}X_k}\tau(\Phi)\}.
\end{eqnarray*}
First we notice that for $k=1,2$ we have 
\begin{eqnarray*}
\nabla_{X_k}^\Phi\tau(\phi)
&=&
-\,e^{\lambda}\cdot\frac{\partial}{\partial x_k}\big( e^{2\lambda}\,\{
f_{x_1}\,\text{\bf e}_1+f_{x_2}\,\text{\bf e}_2\}\big)\\
&=&
-\,e^{3\lambda}\{
(2\,\lambda_{x_k}f_{x_1}+f_{x_1x_k})\,\text{\bf e}_1\\
& &
\qquad +\,(2\,\lambda_{x_k}f_{x_2}+f_{x_2x_k})\,\text{\bf e}_2\}
\end{eqnarray*}
Differentiating once more gives 
\begin{eqnarray*}
& &\sum_{k=1}^m\nabla_{X_k}^\Phi\nabla_{X_k}^\Phi\tau(\phi)\\
&=&
-\,\sum_{k=1}^2e^{\lambda}\cdot\frac{\partial}{\partial x_k}
(e^{3\lambda}(2\lambda_{x_k}f_{x_1}+f_{x_1x_k}))\,\text{\bf e}_1\\
& &
-\,\sum_{k=1}^2e^{\lambda}\cdot\frac{\partial}{\partial x_k}
(e^{3\lambda}(2\,\lambda_{x_k}f_{x_2}+f_{x_2x_k}))\,\text{\bf e}_2\\
&=&-\,e^{4\lambda}\cdot\sum_{k=1}^2
(6\,\lambda_{x_k}^2f_{x_1}+2\,\lambda_{x_kx_k}f_{x_1}
+5\,\lambda_{x_k}\,f_{x_1x_k} +f_{x_1x_kx_k})\,\text{\bf e}_1\\
& &-\,e^{4\lambda}\cdot\sum_{k=1}^2
(6\,\lambda_{x_k}^2f_{x_2}+2\,\lambda_{x_kx_k}f_{x_2}
+5\,\lambda_{x_k}\,f_{x_2x_k} +f_{x_2x_kx_k})\,\text{\bf e}_2.
 \end{eqnarray*}
For the second part of the bitension field $\tau^2(\Phi)$ of $\Phi$ we now yield
\begin{eqnarray*}
& &-\sum_{k=1}^m\nabla^\Phi_{\nabla_{X_k}X_k}(\tau(\Phi))\\
&=&-e^{2\lambda}\,(\lambda_{x_1}+f_{x_1})\cdot\frac{\partial}{\partial x_1}\,\tau(\Phi)
-e^{2\lambda}\,(\lambda_{x_2}+f_{x_2})\cdot\frac{\partial}{\partial x_2}\,\tau(\Phi)\\
&=&e^{2\lambda}\,(\lambda_{x_1}+f_{x_1})\cdot\frac{\partial}{\partial x_1}(\,e^{2\lambda}\cdot\{f_{x_1}\,\text{\bf e}_1
+f_{x_2}\,\text{\bf e}_2\})\\
& &
+\,e^{2\lambda}\,(\lambda_{x_2}+f_{x_2})\cdot\frac{\partial}{\partial x_2}(e^{2\lambda}\cdot\{f_{x_1}\,\text{\bf e}_1
+f_{x_2}\,\text{\bf e}_2\})\\
&=&e^{4\lambda}\,(\lambda_{x_1}+f_{x_1})\cdot
\{(2\lambda_{x_1}f_{x_1}+f_{x_1x_1})\,\text{\bf e}_1
+(2\lambda_{x_1}f_{x_2}+f_{x_2x_1})\,\text{\bf e}_2\}\\
& &
+\,e^{4\lambda}\,(\lambda_{x_2}+f_{x_2})\cdot
\{(2\lambda_{x_2}f_{x_1}+f_{x_1x_2})\,\text{\bf e}_1
+(2\lambda_{x_2}f_{x_2}+f_{x_2x_2})\,\text{\bf e}_2\}
\end{eqnarray*}
We now easily obtain the stated result by adding the terms.
\end{proof}

\section{Explicit Proper Biharmonic Submersions.}
\label{section-explicit-examples}

In the Section \ref{section-tension-fields} we have derived explicit formulae for the tension fields $\tau (\Phi)$ and $\tau^2(\Phi)$.  This leads to a system of {\it non-linear} partial differential equations for the pair of functions $(\lambda,f)$. We are now interested in constructing Riemannian metrics $g$, on open subsets $M$ of $\rn^m$, turning the horizontally conformal submersion $\Phi:(M,g)\to\cn$ into {\it proper} biharmonic maps i.e. finding explicit solutions $(\lambda,f)$ to the system
$$(f_x,f_y)\neq 0,\ \ D_1(\lambda,f)=0\ \ \text{and}\ \ D_2(\lambda,f)=0.$$ 
Let us first consider the case when $\lambda=\alpha\in\Omega(M)$ i.e.  independent of the two first coordinates $x$ and $y$. Then the differential operators $D_1$ and $D_2$ simplify to 
\begin{eqnarray*}
D_1(\alpha,f)
&=&
(f_{x}\cdot f_{xx}+f_{y}\cdot f_{xy}-f_{xxx}-f_{xyy})\\
&=&
\tfrac 12(f_x^2+f_y^2-2\,(f_{xx}+f_{yy}))_x,
\end{eqnarray*}
\begin{eqnarray*}
D_2(\alpha,f)
&=&
(f_{x}\cdot f_{yx}+f_{y}\cdot f_{yy}-f_{yxx}-f_{yyy})\\
&=&
\tfrac 12(f_x^2+f_y^2-2\,(f_{xx}+f_{yy}))_y.
\end{eqnarray*}
As a first example we have the following which clearly gives solutions to the system under consideration.

\begin{example}\label{example-biharmonic-1}
For an open subset $M$ of $\rn^m$ and $A,B,\alpha,\beta\in\Omega(M)$ let the functions $\lambda,f:M\to\rn$ be defined by 
$$\lambda ({\bf x})=\alpha,\ \ f({\bf x})=A\cdot x+B\cdot y +\beta.$$ 
If $A^2+B^2\neq 0$, then the associated map $\Phi:(M,g)\to\cn$ is  horizontally conformal and proper biharmonic i.e.
$$(f_x,f_y)\neq 0, \ \ D_1(\alpha,f)=0
\ \ \text{and}\ \  D_2(\alpha,f)=0.$$
It is clear that the choice of $M$ and $A,B,\alpha,\beta\in\Omega(M)$ can lead to rather non-trivial geometries $(M,g)$.
\end{example}

If we now assume that the function $f:M\to\rn$ is independent of the coordinate $y$ then we have that 
$$f_x\neq 0,\ \ D_1(\alpha,f)
=f_x\cdot f_{xx}-f_{xxx}=0\ \ \text{and}\ \ D_2(\alpha,f)=0.$$
The ordinary differential equation 
$$D_1(\alpha,f)=f_x\cdot f_{xx}-f_{xxx}=\frac 12(f_x^2-2f_{xx})_x=0$$
can easily be integrated to
$$f_x({\bf x})=A\cdot\tan(\frac{A\cdot x}2+B),$$
for some $A,B\in\Omega(M)$. Integrating yet again, we finally obtain 
$$f({\bf x})=\log (1 + \tan(\frac{A\cdot x}2+B)^2)+\beta,$$
defined on the appropriate open subset $M$ of $\rn^m$ and with $\beta\in\Omega(M)$.  From this we see that under the above mentioned assumptions and modulo the functions $A,B,\beta\in\Omega(M)$, the solution is uniquely determined. This leads to the following.

\begin{example}\label{example-mother-1}
For an open subset $M$ of $\rn^m$ and $A,B,\alpha,\beta,\theta\in\Omega(M)$ let the functions $\lambda,f:M\to\rn$ be defined by 
$$\lambda ({\bf x})=\alpha,\ \ f({\bf x})=
\log(1 + \tan({A\cdot (\cos\theta\cdot x+\sin\theta\cdot y)} + B)^2)+\beta.$$
If $A\neq 0$, then the associated horizontally conformal map $\Phi:M\to\cn$ is proper biharmonic.	
\end{example}

By the seperation of variables, one easily yields the next two families of solutions.

\begin{example}
For an open subset $M$ of $\rn^m$ and  $A,B,C,D,\alpha,\beta\in\Omega(M)$ let the functions $\lambda,f:M\to\rn$ be defined by 
$$\lambda ({\bf x})=\alpha,\ \ f({\bf x})=
\log((1 + \tan({A\cdot x} + C)^2)\cdot (1 + \tan({B\cdot y} + D)^2))+\beta.$$
If $A^2+B^2\neq 0$ then the associated horizontally conformal map $\Phi:(M,g)\to\cn$ is proper biharmonic.
\end{example}

\begin{example}
For an open subset $M$ of $\rn^m$ and  $A,B,C,D,\alpha,\beta\in\Omega(M)$ let the functions $\lambda,f:M\to\rn$ be defined by 
$$\lambda ({\bf x})=\alpha,\ \ f({\bf x})=
-2\,\log((A\cdot x + C)(B\cdot y +D))+\beta.$$
If $A^2+B^2\neq 0$ then the associated horizontally conformal map $\Phi:(M,g)\to\cn$ is proper biharmonic.
\end{example}

We also yield the following examples without assuming the condition  $\lambda\in\Omega(M)$.

\begin{example}
For an open subset $M$ of $\rn^m$ and  $A,B,C,D,\alpha,\beta\in\Omega(M)$ let the functions $\lambda,f:M\to\rn$ be defined by 
$$\lambda ({\bf x})=A\cdot x+B\cdot y+\alpha,\ \ 
f({\bf x})=C\cdot x +D\cdot y+\beta.$$
If $A\,C+B\,D=2\,(A^2+B^2)\neq 0$ then the associated horizontally conformal map $\Phi:(M,g)\to\cn$ is proper biharmonic.
\end{example}

\begin{example}
For an open subset $M$ of $\rn^m$ and  $A,B,r,\alpha,\beta,\theta\in\Omega(M)$ let the functions $\lambda,f:M\to\rn$ be defined by
$$\lambda({\bf x})=-r\cdot (\cos\theta\cdot x+\sin\theta\cdot y)+\alpha,$$
$$f({\bf x})= A\cdot \exp(2\,r\cdot(\cos\theta\cdot x + \sin\theta\cdot y)+B) + \beta.$$
If $r\neq 0$, then the associated horizontally conformal map $\Phi:(M,g)\to\cn$ is proper biharmonic.
\end{example}

\begin{example}	For an open subset $M$ of $\rn^m$ and  $A,B,r,\alpha,\beta,\theta\in\Omega(M)$ let the functions $\lambda,f:M\to\rn$ be defined by
$$\lambda({\bf x})=\tfrac 12\cdot\log(A\cdot\exp(r\cdot(\cos\theta\cdot x+\sin\theta\cdot y))+B)+\alpha,$$
$$f({\bf x})= r\cdot(\cos\theta\cdot x + \sin\theta\cdot y) + \beta.$$
	If $r\neq 0$, then the associated horizontally conformal map $\Phi:(M,g)\to\cn$ is proper biharmonic.
\end{example}

\begin{example}	For an open subset $M$ of $\rn^m$ and  $A,\alpha,\beta,\theta\in\Omega(M)$ let the functions $\lambda,f:M\to\rn$ be defined by
$$\lambda({\bf x})=\tfrac 12\cdot\log(\cos\theta\cdot x+\sin\theta\cdot y)+\alpha,$$
$$f({\bf x})= A\cdot\log(\cos\theta\cdot x + \sin\theta\cdot y) + \beta.$$
Then the associated horizontally conformal map $\Phi:(M,g)\to\cn$ is proper biharmonic.
\end{example}

\begin{example}\label{example-biharmonic-9}
For an open subset $M$ of $\rn^m$ and  $A,\alpha,\beta,\theta\in\Omega(M)$ let the functions $\lambda,f:M\to\rn$ be defined by
$$\lambda({\bf x})=A\cdot\log(\cos\theta\cdot x+\sin\theta\cdot y)+\alpha,$$
$$f({\bf x})= 2\,(A-1)\cdot\log(\cos\theta\cdot x + \sin\theta\cdot y) + \beta.$$
Then the associated horizontally conformal map $\Phi:(M,g)\to\cn$ is proper biharmonic if and only if $A\neq 1$.
\end{example}

\section{The tension Fields $\tau(\Phi^2)$ and $\tau^2(\Phi^2)$}
\label{section-tension-fields-2}

Our second principal aim is to construct Riemannian manifolds $(M,g)$, of the form introduced in Section \ref{section-geometries}, such that the  horizontally conformal submersion $\Phi:(M,g)\to\cn$ with   $$\Phi:{\bf x}\mapsto (x+iy)\cong (x\cdot\text{\bf e}_1+y\cdot\text{\bf e}_2)$$ 
is a proper $(2,1)$-harmonic morphism.  For this purpose we now want to determine the tension field $\tau(\Phi^2)$ and the bitension field $\tau^2(\Phi^2)$ of $\Phi^2$, respectively.  For this we have the following useful result.

\begin{theorem}\cite{Gha-Gud-1}\label{theorem-(2,1)}
A complex-valued function $\phi:(M,g)\to\cn$ from a Riemannian manifold is a $(2,1)$-harmonic morphism if and only if 
$$\kappa(\phi,\phi)=0,\ \ \tau^2(\phi)=0\ \ \text{and}\ \ \tau^2(\phi^2)=0.$$
\end{theorem}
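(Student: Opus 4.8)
The plan is to characterize when the composition $f\circ\phi$ is $2$-harmonic, for an arbitrary local real-valued harmonic function $f$ on $\cn=\rn^2$, and to convert this into intrinsic conditions on $\phi$. A real-valued function $f:U\subset\rn^2\to\rn$ being $1$-harmonic means $f_{uu}+f_{vv}=0$, i.e. $f$ is (the real part of) a holomorphic function, so locally $f$ ranges over all harmonic functions of two real variables. The key computational engine is the chain rule for the Laplace-Beltrami operator applied to a composition $f\circ\phi$ where $\phi$ is complex-valued; this will express $\tau(f\circ\phi)$ in terms of the first and second partial derivatives of $f$ evaluated along $\phi$, with coefficients built from $\tau(\phi)$, $\tau(\bar\phi)$ and the conformality operators $\kappa(\phi,\phi)$, $\kappa(\phi,\bar\phi)$, $\kappa(\bar\phi,\phi)$.

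First I would write $\phi=\phi_1+i\phi_2$ and expand $\tau(f\circ\phi)$ using the composition formula
\begin{equation*}
\tau(f\circ\phi)=\sum_{a,b}(f_{ab}\circ\phi)\,\kappa(\phi_a,\phi_b)+\sum_a(f_a\circ\phi)\,\tau(\phi_a),
\end{equation*}
and then re-package the real derivatives $f_a,f_{ab}$ into Wirtinger derivatives so that everything is organised by the three independent quadratic conformality terms $\kappa(\phi,\phi)$, $\kappa(\phi,\bar\phi)$, $\kappa(\bar\phi,\bar\phi)$ and the two linear tension terms $\tau(\phi)$, $\tau(\bar\phi)$. Since $f$ is harmonic its pure second Wirtinger derivative $f_{zz}$ need not vanish but the mixed one $f_{z\bar z}$ does, which is precisely the $1$-harmonicity of $f$; this is where the hypothesis that $f$ is $1$-harmonic (rather than arbitrary) enters and kills the $\kappa(\phi,\bar\phi)$ coefficient.

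Next I would iterate: apply $\tau$ once more to $\tau(f\circ\phi)$ to obtain $\tau^2(f\circ\phi)$, again using the product rule \eqref{equation-basic} and the composition formula, and collect the result as a sum of terms each of which is a universal holomorphic derivative $f_{z\dots z}\circ\phi$ (of orders up to four) multiplied by an intrinsic scalar built from $\phi$. The requirement that $f\circ\phi$ be $2$-harmonic for \emph{every} $1$-harmonic $f$ forces each of these coefficient scalars to vanish separately, because the jet values $(f_z,f_{zz},f_{zzz},f_{zzzz})$ of a holomorphic function at a point can be prescribed independently. The main obstacle, and the heart of the proof, is the bookkeeping: showing that after this linear-independence argument the collection of coefficient conditions collapses to exactly the three stated equations $\kappa(\phi,\phi)=0$, $\tau^2(\phi)=0$ and $\tau^2(\phi^2)=0$, with no extra constraints. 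Concretely, the vanishing of the top-order ($f_{zzzz}$) coefficient yields $\kappa(\phi,\phi)^2=0$ hence $\kappa(\phi,\phi)=0$; feeding this back, the lower-order coefficients simplify dramatically, and the $f_{zz}$-coefficient reorganises—using the product rule \eqref{equation-basic} to recognise $\tau^2(\phi^2)$ in terms of $\tau^2(\phi)$, $\tau(\phi)$ and $\kappa(\phi,\phi)$—into the pair $\tau^2(\phi)=0$ and $\tau^2(\phi^2)=0$.

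Finally I would argue the converse: assuming $\kappa(\phi,\phi)=0$, $\tau^2(\phi)=0$ and $\tau^2(\phi^2)=0$, I substitute these back into the collected expression for $\tau^2(f\circ\phi)$ and check that it vanishes identically for every $1$-harmonic $f$, thereby establishing the $(2,1)$-harmonic morphism property from Definition \ref{definition-pq}. Since the forward direction already used only the independence of the holomorphic jet of $f$, the same organised expression makes the converse essentially automatic. I expect the genuine difficulty to be purely the algebra of tracking the many second-order terms in $\tau^2$ and correctly identifying the combination that is $\tau^2(\phi^2)$; the conceptual steps—Wirtinger repackaging, jet-independence, and the product rule \eqref{equation-basic}—are straightforward once set up.
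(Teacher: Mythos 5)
The paper itself gives no proof of this theorem: it is imported verbatim from \cite{Gha-Gud-1}, so there is nothing internal to compare against. Your blind reconstruction is nevertheless the correct (and, as far as the cited sources go, the standard) argument: expand $\tau(f\circ\phi)$ by the composition law, pass to Wirtinger derivatives so that harmonicity of $f$ kills the $\kappa(\phi,\bar\phi)$-term, apply $\tau$ once more using \eqref{equation-basic}, and invoke the independence of the holomorphic $4$-jet of $f$ at a point to force each coefficient to vanish. The resulting coefficients are, in order, $\kappa(\phi,\phi)^2$ for $f_{zzzz}$, $2\tau(\phi)\kappa(\phi,\phi)+2\kappa(\phi,\kappa(\phi,\phi))$ for $f_{zzz}$, $\tau(\kappa(\phi,\phi))+\tau(\phi)^2+2\kappa(\tau(\phi),\phi)$ for $f_{zz}$, and $\tau^2(\phi)$ for $f_z$; the first gives $\kappa(\phi,\phi)=0$, the second and the $\tau(\kappa(\phi,\phi))$-term then vanish identically, and the remaining $f_{zz}$-condition $\tau(\phi)^2+2\kappa(\tau(\phi),\phi)=0$ is exactly $\tau^2(\phi^2)=0$ by the identity of Lemma \ref{lemma-phi-phi}(b) once $\tau^2(\phi)=0$ is known from the $f_z$-coefficient. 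Your only imprecision is attributing both $\tau^2(\phi)=0$ and $\tau^2(\phi^2)=0$ to the $f_{zz}$-coefficient, when the former comes from the $f_z$-coefficient; with that bookkeeping fixed, the argument, including the automatic converse, is sound.
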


\begin{lemma}\label{lemma-phi-phi}
Let $\phi:(M,g)\to\cn$ be a horizontally conformal proper biharmonic function.  Then the tension field $\tau(\phi^2)$ and the bitension field $\tau^2(\phi^2)$ of $\phi^2$ satisfy
\begin{itemize}
\item[(a)] $\tau(\phi^2)=2\cdot\tau(\phi)\,\phi\neq 0$,
\item[(b)] $\tau^2(\phi^2)=2\,(\tau(\phi)^2+2\cdot\kappa(\tau(\phi),\phi))$.
	\end{itemize}
\end{lemma}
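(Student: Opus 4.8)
The plan is to derive both identities by applying the product formula \eqref{equation-basic} for the tension field, namely $\tau(\phi\cdot\psi)=\tau(\phi)\cdot\psi+2\kappa(\phi,\psi)+\phi\cdot\tau(\psi)$, together with the two defining properties of $\phi$: horizontal conformality, which for a complex-valued function amounts to $\kappa(\phi,\phi)=g(\nabla\phi,\nabla\phi)=0$, and proper biharmonicity, which means $\tau^2(\phi)=0$ while $\tau(\phi)$ does not vanish identically. Everything then reduces to two applications of \eqref{equation-basic} followed by deleting the terms that these properties annihilate.

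For part (a), I would set $\psi=\phi$ in \eqref{equation-basic} to obtain $\tau(\phi^2)=\tau(\phi)\cdot\phi+2\kappa(\phi,\phi)+\phi\cdot\tau(\phi)=2\,\tau(\phi)\,\phi+2\kappa(\phi,\phi)$. The conformality term $2\kappa(\phi,\phi)$ vanishes because $\phi$ is horizontally conformal, leaving $\tau(\phi^2)=2\,\tau(\phi)\,\phi$. For the non-vanishing assertion I would argue that $\tau(\phi)$ is not identically zero since $\phi$ is \emph{proper} biharmonic, and that $\phi$, being a non-constant horizontally conformal submersion, is non-zero on a dense open subset; hence the product $2\,\tau(\phi)\,\phi$ does not vanish identically.

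For part (b), I would iterate, using part (a) to write $\tau^2(\phi^2)=\tau(\tau(\phi^2))=2\,\tau(\tau(\phi)\cdot\phi)$ by linearity of $\tau$. Applying \eqref{equation-basic} once more, now to the two factors $\tau(\phi)$ and $\phi$, yields $\tau(\tau(\phi)\cdot\phi)=\tau^2(\phi)\cdot\phi+2\kappa(\tau(\phi),\phi)+\tau(\phi)\cdot\tau(\phi)$. Here the first summand drops out because $\phi$ is biharmonic, i.e. $\tau^2(\phi)=0$, and the last summand is simply $\tau(\phi)^2$. Collecting the surviving terms and reinstating the overall factor $2$ gives $\tau^2(\phi^2)=2\,(\tau(\phi)^2+2\kappa(\tau(\phi),\phi))$, as claimed.

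The computation is essentially routine once \eqref{equation-basic} is available; the only point requiring slight care is the non-vanishing claim in (a), where one must invoke both that $\tau(\phi)\not\equiv 0$ (properness) and that $\phi\not\equiv 0$ (as a submersion), rather than merely the biharmonic equation. I expect no genuine obstacle beyond keeping careful track of which defining property kills which term at each stage.
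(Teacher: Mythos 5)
Your proof is correct and follows essentially the same route as the paper: two applications of the product rule \eqref{equation-basic}, with the $\kappa(\phi,\phi)$ term killed by horizontal conformality and the $\tau^2(\phi)\cdot\phi$ term killed by biharmonicity. Your justification of the non-vanishing claim in (a) is in fact slightly more careful than the paper's, which simply asserts $2\,\tau(\phi)\,\phi\neq 0$ without comment.
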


\begin{proof}
Since the complex-valued function $\phi$ is horizontally conformal and biharmonic we know that $\kappa(\phi,\phi)=0$ and $\tau^2(\phi)=0$.  Then the result follows from the following elementary calculations.
\begin{eqnarray*}
\tau^2(\phi^2)
&=&\tau(\tau(\phi^2))\\
&=&\tau(\tau(\phi)\,\phi+2\,\kappa(\phi,\phi)+\phi\,\tau(\phi))\\
&=&2\cdot\tau(\tau(\phi)\,\phi)\\
&=&2\cdot\big\{\tau^2(\phi)\,\phi
+2\,\kappa(\tau(\phi),\phi)+\tau(\phi)^2\big\}.
\end{eqnarray*}
\end{proof}

\begin{definition}\label{definition-diff-operators-2}
Let $\lambda,f:M\to\rn$ be differentiable functions on an open subset $M$ of $\rn^m$ with coordinates ${\bf x}=(x,y,x_3,\dots,x_m)$. Then we define the {\it non-linear} partial differential operators $D_3,D_4$ by 
\begin{eqnarray*}
D_3(\lambda,f)
&=&
2\cdot (f_{x}^2-f_{y}^2)
-8\cdot (\lambda_{x}\,f_{x}-\lambda_{y}\,f_{y})
-4\cdot (f_{xx}-f_{yy}),
\end{eqnarray*}
\begin{eqnarray*}
D_4(\lambda,f)
&=&
4\cdot f_{x}f_{y}
-8\cdot (\lambda_{x}\,f_{y}+\lambda_{y}\,f_{x})
-8\cdot f_{xy}.	
\end{eqnarray*}
\end{definition}

\begin{proposition}
Let $\Phi:(\rn^m,g)\to\cn$ be the horizontally conformal submersion $$\Phi:{\bf x}\mapsto (x+iy)\cong (x\cdot\text{\bf e}_1+y\cdot\text{\bf e}_2)$$ 
with dilation $e^\lambda:\rn^m\to\rn^+$. If $\Phi$ is proper biharmonic then $\tau(\Phi^2)\neq 0$ and the bitension field $\tau^2(\Phi^2)$ of $\Phi^2$ satisfies
\begin{eqnarray*}
\tau^2(\Phi^2)
&=& 
 e^{4\lambda}\cdot D_3(\lambda,f)\cdot\text{\bf e}_1
+e^{4\lambda}\cdot D_4(\lambda,f)\cdot\text{\bf e}_2.
\end{eqnarray*}
\end{proposition}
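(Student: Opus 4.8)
The plan is to reduce everything to the abstract identity in Lemma~\ref{lemma-phi-phi} together with the already-established formula for $\tau(\Phi)$ in Proposition~\ref{proposition-tension-field}. The claim $\tau(\Phi^2)\neq 0$ is immediate: since $\Phi$ is proper biharmonic we have $\tau(\Phi)\neq 0$, so part (a) of Lemma~\ref{lemma-phi-phi} gives $\tau(\Phi^2)=2\,\tau(\Phi)\,\Phi\neq 0$. For the main formula I would invoke part (b) of the same lemma, which yields
$$\tau^2(\Phi^2)=2\,\tau(\Phi)^2+4\,\kappa(\tau(\Phi),\Phi).$$
It then suffices to compute the two summands on the right-hand side separately and to match their $\text{\bf e}_1$- and $\text{\bf e}_2$-components against $D_3(\lambda,f)$ and $D_4(\lambda,f)$.

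For the first summand I would substitute $\tau(\Phi)=-e^{2\lambda}(f_x\,\text{\bf e}_1+f_y\,\text{\bf e}_2)$ and use the identification $\text{\bf e}_1\cong 1$, $\text{\bf e}_2\cong i$, so that $\tau(\Phi)$ is the complex number $-e^{2\lambda}(f_x+i\,f_y)$. Squaring gives $\tau(\Phi)^2=e^{4\lambda}\big((f_x^2-f_y^2)\,\text{\bf e}_1+2\,f_x f_y\,\text{\bf e}_2\big)$, which, after multiplication by the factor $2$, already supplies the quadratic terms $2(f_x^2-f_y^2)$ and $4\,f_x f_y$ appearing in $D_3$ and $D_4$.

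For the second summand I would expand $\kappa(\tau(\Phi),\Phi)$ in the global orthonormal frame via $\kappa(\phi,\psi)=\sum_{k=1}^m X_k(\phi)\,X_k(\psi)$. Since $X_k(\Phi)=0$ for $k\geq 3$ while $X_1(\Phi)=e^\lambda$ and $X_2(\Phi)=i\,e^\lambda$, only the $k=1,2$ terms survive, so the vertical dependence of $\lambda$ and $f$ never enters. Writing $X_k=e^{\lambda_k}\partial_{x_k}$ with $\lambda_1=\lambda_2=\lambda$, I would differentiate $\tau(\Phi)=-e^{2\lambda}(f_x+i\,f_y)$; the product rule applied to the factor $e^{2\lambda}$ produces exactly the cross-terms in $\lambda_x,\lambda_y$, while differentiating $f_x,f_y$ produces the second-order terms $f_{xx},f_{yy},f_{xy}$. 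Collecting the coefficients of $\text{\bf e}_1$ and $\text{\bf e}_2$ and adding the contribution from $2\,\tau(\Phi)^2$ then recovers precisely $e^{4\lambda}\,D_3(\lambda,f)$ and $e^{4\lambda}\,D_4(\lambda,f)$.

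The main obstacle is purely the bookkeeping in the $\kappa$-computation: one must track the complex unit $i=\text{\bf e}_2$ carefully, since multiplication by $X_2(\Phi)=i\,e^\lambda$ swaps and signs the real and imaginary parts, and one must verify that the product-rule factors of $e^\lambda$ and $e^{2\lambda}$ assemble into the single overall power $e^{4\lambda}$. No genuinely new idea is needed beyond the two cited results and the orthonormal-frame expression for $\kappa$; the content is entirely a careful but routine collection of terms.
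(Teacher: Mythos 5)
Your proposal is correct and follows essentially the same route as the paper: both deduce $\tau(\Phi^2)\neq 0$ and the identity $\tau^2(\Phi^2)=2\,\tau(\Phi)^2+4\,\kappa(\tau(\Phi),\Phi)$ from Lemma~\ref{lemma-phi-phi}, then compute the two summands separately using $\tau(\Phi)=-e^{2\lambda}(f_x\,\text{\bf e}_1+f_y\,\text{\bf e}_2)$ and the frame expansion $\kappa(\tau(\Phi),\Phi)=\sum_k X_k(\tau(\Phi))X_k(\Phi)$ with only $k=1,2$ contributing. The remaining work you describe as bookkeeping is exactly the term collection the paper carries out, and it does assemble into $e^{4\lambda}D_3$ and $e^{4\lambda}D_4$ as claimed.
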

\smallskip 

\begin{proof}
It follows from Lemma \ref{lemma-phi-phi} that $\tau(\Phi^2)\neq 0$ and  $$\tau^2(\Phi^2)=2\,\tau(\Phi)^2+4\,\kappa(\tau(\Phi),\Phi).$$ Then the following computations provide the result.
\begin{eqnarray*}
\tau(\Phi)^2&=&
\,e^{4\lambda}\cdot\{
((f_{x_1})^2-(f_{x_2})^2)\,\text{\bf e}_1
+2\,f_{x_1}f_{x_2}\,\text{\bf e}_2\}.
\end{eqnarray*}
\begin{eqnarray*}
\kappa(\tau(\Phi),\Phi)
&=&
\sum_{k=1}^m
X_k(\tau(\Phi))\cdot X_k(\Phi)\\
&=&
-\sum_{k=1}^2
X_k( e^{2\lambda}\,\{
f_{x_1}\,\text{\bf e}_1+f_{x_2}\,\text{\bf e}_2
\})\cdot e^{\lambda}\,\text{\bf e}_k\\
&=&
-\,e^{4\lambda}\big\{\sum_{k=1}^2
(2\,\lambda_{x_k}\,f_{x_1}+f_{x_1x_k})\,\text{\bf e}_1\cdot\text{\bf e}_k\\
& &\qquad +\sum_{k=1}^2
(2\,\lambda_{x_k}\,f_{x_2}+f_{x_2x_k})\,\text{\bf e}_2\cdot\text{\bf e}_k\big\}\\
&=&
-\,e^{4\lambda}
(2\,\lambda_{x_1}\,f_{x_1}+f_{x_1x_1}
-2\,\lambda_{x_2}\,f_{x_2}-f_{x_2x_2})
\,\text{\bf e}_1\\
& &
-\,e^{4\lambda}\,
(2\,\lambda_{x_2}\,f_{x_1}+f_{x_1x_2}
+2\,\lambda_{x_1}\,f_{x_2}+f_{x_2x_1})
\,\text{\bf e}_2.
\end{eqnarray*}

\end{proof}

\section{Explicit $(2,1)$-Harmonic Morphisms}

In Sections \ref{section-tension-fields} and  \ref{section-tension-fields-2},  we have defined the partial differential operators $D_1,D_2,D_3$ and $D_4$.  We will now use these to construct explicit proper $(2,1)$-harmonic morphisms $\Phi:(M,g)\to\cn$.  We will then show how these can be employed to produce a large variety of concrete proper biharmonic maps.

\begin{example}\label{example-(2,1)-1}
For an open subset $M$ of $\rn^m$ and  $A,B,\alpha,\beta\in\Omega(M)$ let the functions $\lambda,f:M\to\rn$ be defined by 
$$
\lambda ({\bf x})=\alpha,\ \ 
f({\bf x})= -2\log(A\cdot(\cos(t)\cdot x +\sin(t)\cdot y)+ B)+\beta.$$
If $A\neq 0$, then the associated horizontally conformal map $\Phi:M\to\cn$ is a proper $(2,1)$-harmonic morphism i.e. 
$$(f_x,f_y)\neq 0,\ 
D_1(\lambda,f)=0,\ D_2(\lambda,f)=0,\ 
D_3(\lambda,f)=0,\ D_4(\lambda,f)=0.$$
\end{example}

The next result is a reformulation of Proposition 3.9 of \cite{Gha-Gud-1}, see also Corollary 3.1 of \cite{Gha-Ou-1}. Together with Example \ref{example-(2,1)-1} it is a useful tool for manufacturing a large variety of proper $(2,1)$-harmonic morphisms $(M,g)\to N^2$, to Riemann surfaces on the non-trivial manifolds constructed there.

\begin{proposition}
\label{proposition-holomorphic-composition}
Let $(M,g)$ be a Riemannian manifold, $N^2$ be a Riemann surface and $\phi:M\to\cn$ be a proper $(2,1)$-harmonic morphism. Further, let $F:U\to N^2$ be a non-constant  holomorphic function defined on an open subset of $\cn$ containing $\phi (M)$.  Then the composition $F\circ \phi:(M,g)\to N^2$ is a proper $(2,1)$-harmonic morphism, in particular a proper biharmonic map.
\end{proposition}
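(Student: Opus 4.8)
The plan is to verify, for the composition $\psi:=F\circ\phi$, the three defining conditions of a $(2,1)$-harmonic morphism supplied by Theorem \ref{theorem-(2,1)}, namely $\kappa(\psi,\psi)=0$, $\tau^2(\psi)=0$ and $\tau^2(\psi^2)=0$, and then to check separately that $\psi$ is \emph{proper}, i.e. that $\tau(\psi)$ does not vanish identically. The computational engine throughout is the holomorphic chain rule. Since $F$ is holomorphic, the gradient obeys $\nabla\psi=F'(\phi)\,\nabla\phi$, and applying $\Div$ together with the product rule \eqref{equation-basic} yields
$$\tau(\psi)=F'(\phi)\,\tau(\phi)+F''(\phi)\,\kappa(\phi,\phi).$$
As $\phi$ is horizontally conformal we have $\kappa(\phi,\phi)=0$, so this collapses to $\tau(\psi)=F'(\phi)\,\tau(\phi)$. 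The first condition is then immediate, since $\kappa(\psi,\psi)=F'(\phi)^2\,\kappa(\phi,\phi)=0$.

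For the second condition I would apply $\tau$ once more to $\tau(\psi)=F'(\phi)\,\tau(\phi)$, again via \eqref{equation-basic}, together with the chain-rule identity $\tau(F'(\phi))=F''(\phi)\,\tau(\phi)$ (the accompanying $\kappa(\phi,\phi)$-term vanishing once more). Using the biharmonicity $\tau^2(\phi)=0$ of $\phi$, the surviving terms assemble into
$$\tau^2(\psi)=F''(\phi)\,\bigl(\tau(\phi)^2+2\,\kappa(\tau(\phi),\phi)\bigr).$$
By Lemma \ref{lemma-phi-phi}(b) the bracket equals $\tfrac12\,\tau^2(\phi^2)$, which is zero because $\phi$ is a $(2,1)$-harmonic morphism; hence $\tau^2(\psi)=0$ and $\psi$ is biharmonic. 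For the third condition, $\psi$ is now horizontally conformal and biharmonic, so Lemma \ref{lemma-phi-phi}(b) applies to $\psi$ itself and gives $\tau^2(\psi^2)=2\bigl(\tau(\psi)^2+2\,\kappa(\tau(\psi),\psi)\bigr)$. Substituting $\tau(\psi)=F'(\phi)\,\tau(\phi)$ and $\nabla\psi=F'(\phi)\,\nabla\phi$, a short computation using $\kappa(\phi,\phi)=0$ factors out $F'(\phi)^2$ and reduces this to $\tau^2(\psi^2)=F'(\phi)^2\,\tau^2(\phi^2)=0$. With all three conditions in hand, Theorem \ref{theorem-(2,1)} certifies that $\psi$ is a $(2,1)$-harmonic morphism.

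It remains to secure the word \emph{proper}, and this is where I expect the only genuine subtlety to lie; the preceding steps are essentially a bookkeeping of holomorphic chain rules so that every combination collapses onto $\tau^2(\phi^2)$. Since $\phi$ is proper biharmonic, the open set $V:=\{\,p\in M:\tau(\phi)(p)\neq 0\,\}$ is nonempty. If $\tau(\psi)=F'(\phi)\,\tau(\phi)$ vanished identically, then $F'(\phi)$ would vanish on $V$, so $\phi$ would map each connected component of $V$ into the zero set $Z=\{\,w:F'(w)=0\,\}$. Because $F$ is non-constant holomorphic, $Z$ is discrete, and a connected open set is sent by the continuous map $\phi$ into a discrete set only if $\phi$ is constant there; but then $\nabla\phi\equiv 0$ and hence $\tau(\phi)\equiv 0$ on that component, contradicting its inclusion in $V$. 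Therefore $\tau(\psi)\not\equiv 0$, so $\psi=F\circ\phi$ is proper biharmonic and, in particular, a proper $(2,1)$-harmonic morphism.
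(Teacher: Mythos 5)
Your argument is correct, and it is worth noting that the paper itself offers no proof of this proposition: it is presented as a reformulation of Proposition 3.9 of \cite{Gha-Gud-1} (see also Corollary 3.1 of \cite{Gha-Ou-1}), so you are supplying an argument where the authors rely on a citation. What you do is entirely internal to the paper's own toolkit: the product rule \eqref{equation-basic}, the holomorphic chain rule $\nabla(F\circ\phi)=F'(\phi)\nabla\phi$, the characterisation of Theorem \ref{theorem-(2,1)}, and the identity of Lemma \ref{lemma-phi-phi}(b), which you correctly observe needs only $\kappa(\cdot,\cdot)=0$ and $\tau^2(\cdot)=0$ (not properness) and can therefore be applied to $\psi=F\circ\phi$ before properness is known. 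The computations all check out: $\tau(\psi)=F'(\phi)\tau(\phi)$, then $\tau^2(\psi)=F''(\phi)\bigl(\tau(\phi)^2+2\kappa(\tau(\phi),\phi)\bigr)=\tfrac12 F''(\phi)\,\tau^2(\phi^2)=0$, and $\tau^2(\psi^2)=F'(\phi)^2\,\tau^2(\phi^2)=0$. By contrast, the paper's proof of the neighbouring Proposition \ref{proposition-harmonic-composition} runs through the geometric composition law $\tau(\psi\circ\phi)=d\psi(\tau(\phi))+\trace\nabla d\psi(d\phi,d\phi)$ and horizontal lifts; your route via the conformality operator buys a shorter, coordinate-free bookkeeping at the level of complex-valued functions, and your final identity-theorem argument for properness (the zero set of $F'$ is discrete, so $F'(\phi)\tau(\phi)\equiv 0$ would force $\phi$ to be constant on a component where $\tau(\phi)\neq 0$) is a genuine point that the citation glosses over. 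The only pedantic caveat is that discreteness of $\{F'=0\}$ requires $F$ to be non-constant on each connected component of $U$ met by $\phi$ (automatic if $U$ is connected), which is the standard reading of the hypothesis.
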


Every complex-valued harmonic function, locally defined in the plane $\cn$, is the sum of a holomorphic and an anti-holomorphic one.  This leads us to the next statement.

\begin{proposition}\label{proposition-harmonic-composition}
Let $(M,g)$ be a Riemannian manifold and $\phi:M\to\cn$ be a submersive  $(2,1)$-harmonic morphism. Further, let $F,G:U\to\cn$ be holomorphic functions defined on an open subset $U$ of $\cn$ containing $\phi (M)$ and $\psi=F+\bar G$.  Then the composition $\psi\circ \phi:(M,g)\to\cn$ is a biharmonic map.  It is proper if and only if 
$$F_z(\tau(\phi))+\overline{G_{z}(\tau(\phi))}\neq 0.$$
\end{proposition}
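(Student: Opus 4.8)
The plan is to compute $\tau(\psi\circ\phi)$ once, read off the properness criterion, and then apply $\tau$ a second time to obtain biharmonicity, feeding in at each stage one of the three conditions that Theorem~\ref{theorem-(2,1)} attaches to a $(2,1)$-harmonic morphism. Writing $\psi\circ\phi=F\circ\phi+\overline{G\circ\phi}$ and using that the Laplace--Beltrami operator has real coefficients, so that $\tau(\overline{G\circ\phi})=\overline{\tau(G\circ\phi)}$, it suffices to understand a single holomorphic composition $H\circ\phi$. For holomorphic $H$ the chain rule gives $\nabla(H\circ\phi)=(H'\circ\phi)\,\nabla\phi$, and the divergence product rule then yields
$$\tau(H\circ\phi)=(H''\circ\phi)\,\kappa(\phi,\phi)+(H'\circ\phi)\,\tau(\phi).$$
Since $\phi$ is a $(2,1)$-harmonic morphism, Theorem~\ref{theorem-(2,1)} provides $\kappa(\phi,\phi)=0$, so the first term drops and $\tau(H\circ\phi)=(H'\circ\phi)\,\tau(\phi)=H_z(\tau(\phi))$. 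Taking $H=F$ and $H=G$ and adding the conjugated contribution gives
$$\tau(\psi\circ\phi)=F_z(\tau(\phi))+\overline{G_z(\tau(\phi))}.$$

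For biharmonicity I would apply $\tau$ to this last expression; by the same conjugation trick it suffices to show that $\tau\big((H'\circ\phi)\,\tau(\phi)\big)=0$ for holomorphic $H$. Expanding by the product formula~\eqref{equation-basic} with factors $H'\circ\phi$ and $\tau(\phi)$, and inserting $\tau(H'\circ\phi)=(H''\circ\phi)\,\tau(\phi)$ together with $\kappa(H'\circ\phi,\tau(\phi))=(H''\circ\phi)\,\kappa(\phi,\tau(\phi))$ (both obtained as above from the chain rule and $\kappa(\phi,\phi)=0$), one finds
$$\tau\big((H'\circ\phi)\,\tau(\phi)\big)=(H''\circ\phi)\,\big(\tau(\phi)^2+2\,\kappa(\tau(\phi),\phi)\big)+(H'\circ\phi)\,\tau^2(\phi).$$
The last summand vanishes because $\tau^2(\phi)=0$, and the decisive point is that Lemma~\ref{lemma-phi-phi}(b) identifies the bracket as $\tfrac12\,\tau^2(\phi^2)$, which is zero by the remaining condition $\tau^2(\phi^2)=0$ of Theorem~\ref{theorem-(2,1)}. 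Hence each holomorphic piece is annihilated by $\tau$, and summing gives $\tau^2(\psi\circ\phi)=0$, so $\psi\circ\phi$ is biharmonic.

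It then only remains to observe that a biharmonic map is proper exactly when its tension field does not vanish identically; since we have shown $\tau(\psi\circ\phi)=F_z(\tau(\phi))+\overline{G_z(\tau(\phi))}$, this is precisely the stated condition. I expect the one genuinely delicate step to be the expansion in the second paragraph: one must keep the Wirtinger bookkeeping straight, recognise that the surviving terms reassemble into the combination $\tau(\phi)^2+2\,\kappa(\tau(\phi),\phi)$ governed by $\tau^2(\phi^2)$, and invoke each of the three morphism conditions at its proper place. Submersivity of $\phi$ plays only an auxiliary role here, guaranteeing that $\phi(M)$ is open so that $F,G$ and their derivatives are genuinely defined and evaluated along the image.
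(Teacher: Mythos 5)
Your argument is correct. The properness half coincides in substance with the paper's: both reduce $\tau(\psi\circ\phi)$ to $F_z(\tau(\phi))+\overline{G_z(\tau(\phi))}$ by showing that the second--order term vanishes. You do this with the Wirtinger chain rule and $\kappa(\phi,\phi)=0$, while the paper invokes the composition law $\tau(\psi\circ\phi)=d\psi(\tau(\phi))+\trace\nabla d\psi(d\phi,d\phi)$ and kills the trace term using horizontal conformality together with the harmonicity of $F+\bar G$ on $\cn$; this is the same cancellation in different clothing. Where you genuinely diverge is the biharmonicity: the paper disposes of it in one sentence, by linearity of $\tau$ and an appeal to Proposition \ref{proposition-holomorphic-composition} (itself imported from the reference \cite{Gha-Gud-1}), whereas you prove it from scratch by a second application of the product rule \eqref{equation-basic}, recognising the surviving combination $\tau(\phi)^2+2\,\kappa(\tau(\phi),\phi)$ as $\tfrac12\,\tau^2(\phi^2)$ via Lemma \ref{lemma-phi-phi}(b) and invoking the three conditions of Theorem \ref{theorem-(2,1)} one at a time. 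Your route is longer but self-contained and makes visible exactly where each hypothesis enters; the paper's is shorter but leans on the external proposition. One small caveat: Lemma \ref{lemma-phi-phi} is stated for \emph{proper biharmonic} $\phi$, while a $(2,1)$-harmonic morphism may well have $\tau(\phi)\equiv 0$; since the identity in part (b) only uses $\kappa(\phi,\phi)=0$ and $\tau^2(\phi)=0$, your use of it is legitimate, but you are really using that identity rather than the lemma verbatim, and it would be cleaner to say so.
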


\begin{proof}
The tension field $\tau$ is linear so it follows from Proposition \ref{proposition-holomorphic-composition} that the composition $\psi\circ\phi:U\to\cn$ is biharmonic.  Following the well-known composition law, see Corollary 3.3.13 of \cite{Bai-Woo-book}, we have
\begin{eqnarray*}
\tau(\psi\circ\phi)
&=&
d\psi(\tau(\phi))+\trace \nabla d\psi(d\phi,d\phi).
\end{eqnarray*}
The map $\phi:(M,g)\to\cn$ is horizontally conformal with dilation of the form $e^\lambda:M\to\rn^+$.  The standard basis vectors ${\bf e}_1$ and ${\bf e}_2$ form a global orthonormal frame on the open subset $U$ of $\cn$.  Let $X$ and $Y$ be their horizontal lifts via $\phi$ so that the vector fields $e^{-\lambda}X$ and $e^{-\lambda}X$ form an orthonormal frame for the horizontal distribution $\H$ of the tangent bundle $TM$. Then 
\begin{eqnarray*}
\trace \nabla d\psi(d\phi,d\phi)
&=&
\nabla d\psi(d\phi(e^{-\lambda}X),d\phi(e^{-\lambda}X))\\
& &
\quad\quad +\nabla d\psi(d\phi(e^{-\lambda}Y),d\phi(e^{-\lambda}Y))\\
&=&
e^{-2\lambda}\cdot 
(\nabla d\psi({\bf e}_1,{\bf e}_1)
+\nabla d\psi({\bf e}_2,{\bf e}_2))\\
&=&
e^{-2\lambda}(\tau(F+\bar G))\\
&=&
0.
\end{eqnarray*}
Furthermore we have
\begin{eqnarray*}
d\psi(\tau(\phi))
&=&
dF(\tau(\phi))+d\bar G(\tau(\phi))\\
&=&
F_z(\tau(\phi))+\overline{G_{z}(\tau(\phi))}.
\end{eqnarray*}
The stated result now follows from these calculations.
\end{proof}

\section{Acknowledgements}

The first author would like to thank the Department of Mathematics at Lund University for its great hospitality during her time there as a postdoc.


\begin{thebibliography}{99}

\bibitem{Bai-Woo-book}
P. Baird, J.C. Wood,
{\it Harmonic morphisms between Riemannian manifolds},
The London Mathematical Society Monographs {\bf 29},
Oxford University Press (2003).
	
\bibitem{Fug}
B. Fuglede, 
{\it Harmonic morphisms between Riemannian manifolds},
Ann. Inst. Fourier {\bf 28} (1978), 107-144.

\bibitem{Gha-Gud-1}
E. Ghandour, S. Gudmundsson,
{\it Complex-valued $(p,q)$-harmonic morphisms from Riemannian manifolds},
preprint (2020).

\bibitem{Gha-Ou-1}
E. Ghandour, Y.-L. Ou,
{\it Generalised harmonic morphisms and horizontally weakly conformal biharmonic maps},
J. Math. Anal. Appl. {\bf 464} (2018), 924-938. 

\bibitem{Gud-bib}
S. Gudmundsson,
{\it The Bibliography of Harmonic Morphisms},
{\tt www.matematik.lu.se/ matematiklu/personal/sigma/harmonic/bibliography.html}

\bibitem{Ish}
T. Ishihara, 
{\it A mapping of Riemannian manifolds which preserves harmonic functions}, 
J. Math. Kyoto Univ. {\bf 19} (1979), 215-229.	
	
\end{thebibliography}
\end{document}